\documentclass[11pt]{article}

\usepackage{amsmath,amsthm,amsfonts,amssymb}
\usepackage[margin=2.2cm]{geometry}
\usepackage{thmtools}
\usepackage{hyperref}
\usepackage{enumerate}
\usepackage{authblk}
\usepackage{orcidlink}

\newtheorem{thm}{Theorem}
\newtheorem{lem}{Lemma}
\newtheorem{prop}{Proposition}[thm]
\newtheorem{cor}{Corollary}

\theoremstyle{definition}
\newtheorem{defin}{Definition}[thm]
\newtheorem{obs}{Observation}
\newtheorem{exmp}{Example}

\newcommand{\nullity}{\operatorname{nullity}}
\newcommand{\sgn}{\operatorname{sgn}}
\newcommand{\indeg}{\operatorname{indeg}}
\newcommand{\outdeg}{\operatorname{outdeg}}
\usepackage{bbold}
\usepackage{tikz}
\usetikzlibrary{matrix,arrows.meta}
\usepackage{enumitem}
\usetikzlibrary{positioning}
\usetikzlibrary{calc}
\usetikzlibrary{decorations.markings}

\title{The number of Pfaffian orientations on punctured polygonally cellulated surfaces} 
\author[a]{Sajal Mukherjee~\orcidlink{0009-0004-6959-4334}\thanks{\texttt{sajal.mukherjee@tcgcrest.org}}}

\author[b]{Pritam Chandra Pramanik~\orcidlink{0009-0000-8332-3347}\thanks{\texttt{pritam.pramanik.80@tcgcrest.org}}}

\author[c]{Arundhati Rakshit~\orcidlink{0009-0002-8209-8584}\thanks{\texttt{arundhati.rakshit.124@tcgcrest.org}}}

\affil[a,b,c]{\small Institute for Advancing Intelligence (IAI), TCG CREST, Kolkata--700091, West Bengal, India}
\affil[a,c]{\small Mathematical \& Information Science, Academy of Scientific and Innovative Research (AcSIR), Ghaziabad--201002, India}

\date{}

\begin{document}
	\maketitle
	
	\begin{abstract}
		 In this paper, we introduce the notion of Pfaffian orientations on (punctured) polygonally cellulated orientable surfaces, and provide an expression for the number of such orientations. This generalizes the notion of Pfaffian orientations on planar graph, where a planar graph is seen as a punctured $2$-sphere, embedded in $\mathbb{R}^3$. So as a direct corollary of our main theorem, we derive the number of Pfaffian orientations on a planar graph.
	
	\end{abstract}
	\textbf{Keywords:} Pfaffian orientations, orientable cellulated surfaces, planar graphs, incidence matrix.\\
	\textit{MSC 2020:} 05C10, 52B05, 05A15.
	
	\section{Introduction}
 The problem of enumerating perfect matchings in planar graphs has originated in the realms of statistical mechanics and chemistry, more particularly, in the problems of dimer models. 	The notion of \emph{Pfaffian orientations} plays a central role towards this end, in counting the number of perfect matchings in planar graphs. Kastelyn has proved the existence of Pfaffian orientations in any bipartite planar graph (see \cite{Kasteleyn1963}, \cite{Kasteleyn}). However, there has been no known result on the number of such Pfaffian orientations, as of now, to the best of our knowledge. 
 
 A \emph{Pfaffian orientation on a planar graph} (see \cite{Arora2023}) is defined as a choice of orientation on the edges of the planar graph such that each face of the graph has odd number of edges oriented in the clockwise direction with respect to that face. \autoref{pf} demonstrates a Pfaffian orientation on a planar graph. \autoref{pf} demonstrates a Pfaffian orientation on a planar graph.
 
  \begin{figure}
 	\centering
 	
 	\begin{tikzpicture}[
 		every node/.style={draw, circle, fill=black, inner sep=0.1pt, minimum size=1mm},
 		baseline, scale=2.1]
 		
 		\tikzset{
 			midarrow/.style={
 				postaction={
 					decorate,
 					decoration={
 						markings,
 						mark=at position 0.6 with {
 							\arrow{Stealth[length=6pt,width=6pt]}
 						}
 					}
 				}
 			}
 		}
 		
 		
 		\node (A) at (0,1.3) {};
 		\node (B) at (1.0,1.5) {};
 		\node (C) at (2.0,1.4) {};
 		
 		\node(D) at (-0.2,0.7) {};
 		\node (E) at (1.0,0.8) {};
 		\node (F) at (2.7,0.9) {};
 		
 		\node (H) at (1,0.1) {};
 		\node (I) at (1.8,0) {};
 		
 		\node (J) at (1.7,-0.8) {};
 		\node (K) at (0.2,-0.2) {};
 		\node (L) at (2.6,0) {};
 		\node (X) at (1.906,0.740) {};
 		
 		\draw[midarrow, thick] (A)--(D);
 		\draw[midarrow, thick] (A)--(B);
 		\draw[midarrow, thick] (B)--(C);
 		\draw[midarrow, thick] (D)--(K);
 		
 		\draw[midarrow, thick] (C)--(F);
 		\draw[midarrow, thick] (F)--(L);
 		
 		\draw[midarrow, thick] (J)--(L);
 		\draw[midarrow, thick] (J)--(K);
 		
 		
 		\draw[midarrow, thick] (D)--(E);
 		\draw[midarrow, thick] (E)--(X);
 		\draw[midarrow, thick] (X)--(F);
 		\draw[midarrow, thick] (I)--(X);
 		
 		\draw[midarrow, thick] (H)--(I);
 		\draw[midarrow, thick] (I)--(L);
 		
 		\draw[midarrow, thick] (E)--(B);
 		
 		\draw[midarrow, thick] (H)--(K);
 		\draw[midarrow, thick] (I)--(J);
 		
 		\draw[midarrow, thick] (E)--(H);
 		\draw[midarrow, thick] (C)--(X);
 		
 	\end{tikzpicture}
 	
 	\caption{A Pfaffian orientation on a planar graph. The arrows represent the orientations on the edges.}\label{pf}
 \end{figure}
In this paper, we introduce a more general notion of Pfaffian orientations on ``punctured" polygonally cellulated surfaces, embedded in $\mathbb{R}^3$, whose $1$-skeletons are not necessarily planar, and provide a count of all such Pfaffian orientations on a given ``punctured" polygonally cellulated orientable surface. Hence, we determine the count of Pfaffian orientations on a ``punctured" polygonally cellulated $2$-sphere.
 
 For any polygonally cellulated surface, the $2$-cells are termed as \emph{faces}, the $1$-cells are termed as \emph{edges} while the $0$-cells are termed as \emph{vertices}. From here onwards, we will refer to polygonally cellulated surfaces as simply cellulated surfaces for our convenience.
 
 Let $K$ be an orientable, connected, closed, cellulated surface of genus $g$, embedded in $\mathbb{R}^3$, where the orientation of each face is determined by the right hand rule along the outward normal vector of the face. \autoref{fig1} illustrates the orientation of one such face.
 
\begin{figure}
	\centering
	\begin{tikzpicture}
		[every node/.style={draw, circle, fill=black, inner sep=0.1pt, minimum size=0.5mm},
		baseline, scale=1.9, transform shape, line join=round, line cap=round,>=Stealth]
		
		\filldraw[fill=gray!20]
		(0,0.2) -- (1,0.7) -- (2.8,0.1) -- (1.8,-0.4) -- cycle;
		
		\draw [thick, ->] (1.4,0.15) -- (1.6, 1) {};
		\draw[thick,->](1.7,0.2)
		arc[
		start angle=20,
		end angle=330,
		x radius=0.4,
		y radius=0.18,
		rotate=-18
		];
		
		\draw[dotted] (0,0.2) -- (-0.9, 0.5);
		\draw[dotted] (1,0.7) -- (0, 1.03);
		\draw[dotted] (0,0.2) -- (-0.5, -0.05);
		\draw[dotted] (1,0.7) -- (1.52, 0.96);
		\draw[dotted] (1.8,-0.4) -- (2.7, -0.7);
		\draw[dotted] (2.8,0.1) -- (3.6, -0.16);
		\draw[dotted] (2.8, 0.1) -- (3.2, 0.3);
		\draw[dotted] (1.8,-0.4) -- (1.4, -0.6);	
	\end{tikzpicture}
	\caption{ A face of an orientable cellulated surface is oriented following the right hand rule along the outward normal vector of the face. Here the upward arrow denotes the outward normal vector of the face and the curved arrow depicts the orientation of the face.} \label{fig1}
\end{figure}

 Now, we remove a face from the cellulated surface $K$. We call this a \emph{``punctured''} cellulated surface and denote it as $K'$. Next we define a Pfaffian orientation on $K'$.
 \begin{defin}
 	Let $K'$ be a punctured, orientable, connected, cellulated surface where each edge is assigned an orientation (or a direction), represented by an arrow on the edge. Such an assignment of orientation on the edges of $K'$ is said to be a \emph{Pfaffian orientation} on $K'$ if for every face $f$ of $K'$, the number of edges $e$ of $f$, whose orientations are opposite to the induced orientation of $f$ on $e$, is odd.
 \end{defin}
 
 A Pfaffian orientation on a planar graph can be equivalently seen as a Pfaffian orienation on a ``punctured" $2$-sphere, embedded in $\mathbb{R}^3$.

 In Section~\ref{sec1}, we count the number of Pfaffian orientations on $K'$ and arrive at the following result.
 For any cellulated surface $K$, we denote the set of vertices of $K$ as $\mathcal{V}(K)$, the set of edges of $K$ as $\mathcal{E}(K)$ and the set of faces of $K$ as $\mathcal{F}(K)$.
 \begin{thm}\label{main}
 	Let $K$ be an orientable, closed, connected, cellulated surface of genus $g$, such that $|\mathcal{V}(K)|=v$ and $K'$ be the punctured, cellulated surface obtained by removing a face from $K$. Then the number of Pfaffian orientations on $K'$, denoted by $\mu$, is given by, $$ \mu = 2^{v-1+ 2g}.$$
 	
 \end{thm}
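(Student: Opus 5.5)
Fix a reference orientation $\omega_0$ on every edge of $K'$. Any orientation then corresponds to a vector $x\in\mathbb{F}_2^{\mathcal{E}(K)}$, with $x_e=1$ exactly when the edge $e$ is reversed relative to $\omega_0$. The plan is to translate the Pfaffian condition into a linear system over $\mathbb{F}_2$ and compute the size of its solution set from ranks of boundary maps of the cellulation.

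\textbf{Step 1 (linearization).} For a face $f$ with boundary edges $\partial f$, let $c_f$ be the number of edges of $f$ that $\omega_0$ orients against the induced orientation of $f$. Reversing an edge changes this count by $\pm1$. So the orientation $x$ is Pfaffian if and only if
\[ \sum_{e\in\partial f} x_e \equiv 1 + c_f \pmod 2 \]
for every face $f\neq f_0$, where $f_0$ is the removed face. In matrix form this reads $Mx=b$, where $M$ is the $(|\mathcal{F}(K)|-1)\times|\mathcal{E}(K)|$ face–edge incidence matrix over $\mathbb{F}_2$ with the row of $f_0$ deleted. This $M$ is exactly the mod-$2$ boundary map $\partial_2$ restricted to the faces other than $f_0$.

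\textbf{Step 2 (consistency and rank).} Over $\mathbb{F}_2$, the only nontrivial linear relation among the rows of the full matrix $\partial_2$ is the sum of all of them. This holds because $K$ is connected and closed, so $H_2(K;\mathbb{F}_2)\cong\mathbb{F}_2$, generated by the sum of all faces. After deleting the row of $f_0$, the remaining rows are linearly independent. Hence $M$ has full row rank $F-1$, where $F=|\mathcal{F}(K)|$, and the system $Mx=b$ is solvable for every right-hand side $b$. I expect this to be the main point to justify. I would argue it either via $\ker\partial_2=H_2(K;\mathbb{F}_2)$, or combinatorially: if some nonempty set $S$ of faces not containing $f_0$ has every edge appearing in an even number of faces of $S$, then, since each edge lies on exactly two faces, $S$ is closed under adjacency across edges. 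By connectedness of the dual graph, $S$ would then be all faces, contradicting $f_0\notin S$.

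\textbf{Step 3 (counting).} The solution set of $Mx=b$ is a coset of $\ker M$, so
\[ \mu = 2^{E-(F-1)}, \]
where $E=|\mathcal{E}(K)|$. Euler's formula for a closed orientable surface of genus $g$ gives $v-E+F=2-2g$, so $E-F+1=v-1+2g$. Therefore $\mu=2^{v-1+2g}$.

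As a sanity check, the kernel dimension $E-F+1=\dim\ker\partial_2^{T}=\dim Z^1(K;\mathbb{F}_2)$ should equal $(v-1)+2g$. Here $v-1$ accounts for the coboundaries, i.e.\ reversing all edges at a vertex, and $2g=\dim H^1$. This matches the formula and provides an independent check of the rank computation in Step 2.
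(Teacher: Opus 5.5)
Your proof is correct. Its counting core is the same as the paper's: both reduce to the rank of the face--edge incidence matrix with the removed face deleted, show this rank is $|\mathcal{F}(K)|-1$ by propagating along the dual graph toward $f_0$ (Lemma~\ref{li}), and finish with Euler's formula.

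The routes differ in two places. First, the paper uses one variable $x(f,e)$ per face--edge incidence and adds edge equations $x(f,e)+x(g,e)=1$ so that these variables come from a single orientation. It then needs Proposition~\ref{pfaf} to reduce that system to the block form $A'$. Your parametrization by reversals relative to $\omega_0$ is that reduced system from the outset.

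Second, and more substantially, the paper computes only the nullity and then proves existence separately in Theorem~\ref{exis}. That proof uses a bipartite graph on a row basis $R$ and the faces, the identity $\sum_{M}\Lambda(M)=\det(\partial_R)\neq 0$, a sign-reversing involution on cyclic perfect matchings, and a Kasteleyn-style peeling of sinks of $G_M$. Your remark that $M$ has full row rank, so $Mx=b$ is solvable for every $b$, makes all of that unnecessary. In fact the paper's own $A'$ already has full row rank, so its existence theorem is not logically needed for the count.

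Your argument has two further advantages. It works directly over $\mathbb{F}_2$, where the system lives; Lemma~\ref{li} is phrased with integer coefficients, though its argument transfers. It also never uses coherence of the face orientations, since any choice is absorbed into $b$.

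What the paper's detour buys is an explicit construction of a Pfaffian orientation. You can get one just as cheaply: take a spanning tree of the dual graph rooted at $f_0$, and fix the parity of the faces leaves-first by flipping the edge joining each face to its parent.
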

 As a corollary, we obtain the following result.
 \begin{cor}\label{cor}
 	The number of Pfaffian orientations on a punctured cellulated $2$-sphere embedded in $\mathbb{R}^3$, is given by, $$ \mu = 2^{v-1},$$ where $v$ is the number of vertices of the cellulated $2$-sphere.
 \end{cor}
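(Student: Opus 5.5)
The plan is to obtain the corollary by specializing \autoref{main}, and then to check the count independently by a direct parity argument, so that the result does not depend on the genus bookkeeping alone.

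\textbf{Specializing \autoref{main}.} A cellulated $2$-sphere $K$ embedded in $\mathbb{R}^3$ is orientable, closed and connected, and has genus $g=0$. Its faces carry the orientation given by the right hand rule along the outward normal, exactly as in the setup of \autoref{main}. Removing one face gives the punctured surface $K'$ of that theorem. Substituting $g=0$ into $\mu=2^{v-1+2g}$ gives $\mu=2^{v-1}$. The only point needing care is that ``punctured cellulated $2$-sphere'' means precisely $K'$ for such a $K$ with $g=0$, which holds by definition.

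\textbf{Independent check as a linear system over $\mathbb{F}_2$.} Fix a reference orientation on every edge. Record an orientation by a vector $x\in\mathbb{F}_2^{\mathcal{E}(K)}$, where $x_e=1$ means edge $e$ is flipped relative to the reference. For each face $f$ of $K'$, the Pfaffian condition reads
$$\sum_{e\in \partial f} x_e = c_f \pmod 2,$$
where $c_f$ is a constant determined by the reference orientation. This gives $|\mathcal{F}(K)|-1$ affine equations in $|\mathcal{E}(K)|$ unknowns. The coefficient rows are the mod-$2$ boundary vectors of the faces of $K'$.

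\textbf{Rank of the system.} For a closed connected surface, the mod-$2$ boundary map $\partial_2$ has kernel equal to $H_2(K;\mathbb{F}_2)\cong\mathbb{F}_2$. That kernel is spanned by the sum of all faces. Hence the only linear dependency among all the face rows is their total sum. Deleting one face therefore leaves $|\mathcal{F}(K)|-1$ linearly independent rows, so the system is consistent. Its solution set has size
$$2^{|\mathcal{E}(K)|-|\mathcal{F}(K)|+1}.$$

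\textbf{Conclusion via Euler's formula.} Euler's formula gives $v-|\mathcal{E}(K)|+|\mathcal{F}(K)|=2-2g$. Hence the exponent equals $v-1+2g$, which becomes $v-1$ for the sphere, matching the specialization. The only nontrivial step is the rank claim, that the face rows of $K'$ are independent. It follows from connectivity, since the support of any dependency must be closed under adjacency across edges.
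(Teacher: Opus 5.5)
Your proof is correct, and its first step is exactly the paper's proof: the paper obtains the corollary simply by putting $g=0$ in Theorem~\ref{main}.

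Your ``independent check'' goes further. It is in effect a different and more economical derivation of Theorem~\ref{main} itself.

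\begin{itemize}
\item You take one $\mathbb{F}_2$ variable per edge, relative to a reference orientation. The paper instead uses per-incidence variables $x(f,e)$ tied together by edge equations. Your choice lands you directly on a system whose coefficient matrix is the mod-$2$ reduction of $[\partial(K')]^{T}$. So you do not need the reduction of Proposition~\ref{pfaf} or the nullity count of Lemma~\ref{nullity}.
\item Your rank step is Lemma~\ref{li}, but carried out over $\mathbb{F}_2$, which is where the system actually lives. The paper states that lemma with integer coefficients, although its connectivity argument works verbatim mod $2$.
\item The main gain is consistency. Full row rank makes your affine system solvable for every right-hand side, so existence of a Pfaffian orientation comes for free. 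The paper gets existence only through a long detour: the bipartite graph $G(K')$, the matching expansion of $\det(\partial_R)$, the sign-reversing involution on cyclic matchings, and the peeling argument of Theorem~\ref{exis}.
\end{itemize}

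What the paper's route offers in return is an explicit Kasteleyn-style construction of one Pfaffian orientation. Your linear-algebra argument is also constructive, though, via Gaussian elimination over $\mathbb{F}_2$.
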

 
We note here that the number of Pfaffian orientations on a punctured, orientable, cellulated surface, does not depend on the face being removed from the surface. Towards the end of Section~\ref{sec1}, we prove the existence of a Pfaffian orientation on a punctured, orientable, cellulated surface by providing an explicit construction.
 
	\section{Preliminaries}\label{prelim}
	Let $K$ be an orientable, connected, closed, cellulated surface. If an edge $e$ is contained in a face $f$, we denote it as $e \subseteq f$. The following two simple properties of such a surface will prove to be useful, subsequently.
	\begin{enumerate}[label=(\alph*)]
		\item Each edge of $K$ is contained in exactly two faces of $K$.
		\item For any two faces $f_0$ and $f_n$ of $K$, there exists a sequence of faces and edges of the following form.
		$$ f_0, e_0, f_1, \dots , e_{n-1}, f_n,$$
		where $f_i \in \mathcal{F}(K)$, $e_i \in \mathcal{E}(K)$, for each $i$ and $f_i \supseteq e_i \subseteq f_{i+1}$ for each $i \in \{0, \dots ,(n-1)\}$.
	\end{enumerate}

For a punctured, cellulated surface, an edge which is shared by exactly two faces of the surface, is termed as an \emph{internal edge}, while an edge which is contained in exactly one face of the surface, is termed as an \emph{external edge}.

	\begin{defin}{(Incidence Number)}
	Let $K$ be an orientable, cellulated surface with a given orientation on its cells. Let $e \in \mathcal{E}(K)$, $f \in \mathcal{F}(K)$. Then the \emph{incidence number} of $e$ with respect to $f$ is defined as,
	
	\begin{equation*}
		\langle f,e \rangle = \begin{cases}
			0,  \text{ 	if } e \subsetneq f,\\
			1,  \text{ if the orientation of $e$ is the induced orientation of $f$ on $e$},\\
			-1,  \text{ otherwise}.
		\end{cases}
	\end{equation*} 
	\end{defin}

 \begin{defin}
	The \emph{incidence matrix} of a cellulated surface $K$  is defined as a matrix where the columns correspond to all the faces of $K$, ordered as $f_1, f_2, \dots$, the rows correspond to all the edges of $K$, ordered as $e_1, e_2, \dots $, and the $(i,j)^{th}$ entry is given by the incidence number of the $i^{th}$ edge with respect to the $j^{th}$ face, i.e., $$ a_{ij}= \langle f_j, e_i\rangle.$$
We denote this matrix as $[\partial(K)]$.	
\end{defin}

	Now let us consider a punctured orientable surface $K'$ and consider the incidence matrix of $K'$, $[\partial(K')]$. We denote the $i^{th}$ column vector of $[\partial(K')]$ as $\bar{f}_i$ and the $i^{th}$ row vector of $[\partial(K')]$ as $\bar{e}_i$. Let $|\mathcal{F}(K)|=p$. The following result regarding the rank of $[\partial(K')]$, will prove to be crucial subsequently.
	\begin{lem}\label{li}
		The columns of $[\partial(K')]$ are linearly independent, i.e., the rank of $[\partial(K')]$ is $(p-1)$.
	\end{lem}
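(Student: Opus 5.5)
Suppose, for contradiction, that $\sum_{i} c_i \bar{f}_i = 0$ for some real scalars $c_i$, where $i$ ranges over the $p-1$ faces of $K'$. I will show that every $c_i$ must vanish. The argument uses the two facts listed above: each edge lies in exactly two faces of $K$, and the faces of $K$ are connected through shared edges, as in property (b).

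\emph{Internal edges.} Fix an internal edge $e$ of $K'$, shared by faces $f_j$ and $f_k$. The row $\bar{e}$ has exactly two nonzero entries, $\langle f_j,e\rangle$ and $\langle f_k,e\rangle$. Since $K$ is orientable and every face carries the orientation induced by the outward normal, the two faces induce opposite orientations on their common edge. Hence these two entries are $1$ and $-1$ in some order. The $e$-th coordinate of $\sum_i c_i\bar f_i=0$ therefore reads $\pm(c_j-c_k)=0$, so $c_j=c_k$.

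\emph{Propagation.} Next I want to conclude that all the $c_i$ are equal. This needs the faces of $K'$ to be connected through internal edges: the face-adjacency graph of $K$ with the removed face $f_0$ deleted must still be connected.

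This is the step I expect to be the main obstacle, since property (b) gives connectivity of $K$ but not of $K$ with a face deleted. I would argue as follows. The faces around any vertex form a cyclic sequence in which consecutive faces share an edge, because a closed surface is locally a disk. Take a chain from property (b) that passes through $f_0$, entering via edge $e_{a}$ and leaving via edge $e_{b}$. Let $u$ be a vertex of $e_a$ and $w$ a vertex of $e_b$. Walk along the boundary of $f_0$ from $u$ to $w$. At each vertex on this walk, go around the cycle of faces at that vertex the way that avoids $f_0$. Consecutive vertices on the walk are joined by an edge of $f_0$, and the other face containing that edge lies in both vertex cycles, so these detours link up. This replaces the visit to $f_0$ by a chain that avoids $f_0$. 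One must still check the degenerate case where a face meets $f_0$ along several edges; the argument above only uses edges on the boundary of $f_0$, so it should go through. Alternatively, I could use the homological fact that removing an open disk from a connected surface leaves it connected, which gives the same conclusion.

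\emph{External edges.} With connectivity in hand, all the $c_i$ equal a common value $c$. Now take an external edge $e$ of $K'$, that is, an edge of the removed face $f_0$; it exists because $f_0$ has a boundary. Its row has a single nonzero entry, equal to $\pm1$, so that coordinate gives $\pm c=0$. Hence $c=0$, all coefficients vanish, and the $p-1$ columns are linearly independent, so the rank of $[\partial(K')]$ is $p-1$.

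The same argument works over $\mathbb{Z}_2$, since it only uses entries $\pm1$, which become $1$; this is likely the version the paper needs for counting orientations.
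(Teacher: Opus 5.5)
Your proof is correct when the faces are embedded polygons, but it goes through a connectivity lemma that the paper's argument does not need. The paper never shows that all coefficients are equal, so it never needs the faces of $K'$ to be connected through internal edges. It picks a face $f_1$ with $c_1\neq 0$ and a chain from property (b) running from $f_1$ to the removed face $f_p$, with distinct faces, so $f_p$ occurs only at the end. Every edge of the chain except the last joins two faces of $K'$, so it is internal, and its row forces the next coefficient to be nonzero. The last edge lies in $f_p$, so it is external, and its row forces $c_{i_{t-1}}=0$, a contradiction. Only the connectivity of $K$ is used, so the step you flag as the main obstacle never arises. Orientability is not needed either, only that the nonzero entries are $\pm1$. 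Your route shows more (coefficients are constant across shared edges), but it costs you that extra lemma.

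That lemma depends on more than properties (a) and (b). Your detour step assumes $f_0$ has only one corner at each of its vertices. If the boundary of $f_0$ passes through a vertex twice, the cycle of faces there contains $f_0$ twice, and there is no way around it that avoids $f_0$. For example, take two triangles $vab$ and $vcd$ on a sphere meeting only at $v$, and let $f_0$ be the complementary hexagon with boundary walk $v,a,b,v,c,d$. This satisfies (a) and (b), yet $K'$ has no internal edges at all, and the lemma still holds there. Your homological alternative does not rescue the step: it gives only topological connectedness, and faces may meet only at vertices.

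The simplest repair within your own framework is to set $c_{f_0}=0$ and propagate equality in $K$ rather than in $K'$. For an edge $e\subseteq f_0$ shared with $g$, the row equation $c_g\langle g,e\rangle=0$ can then be read as $c_g\langle g,e\rangle+c_{f_0}\langle f_0,e\rangle=0$. Since the two incidence numbers are opposite, this says $c_g=c_{f_0}$. Internal edges give the same equality as before, so every edge of $K$ equates the coefficients of its two faces. Property (b) then gives $c_f=c_{f_0}=0$ for all $f$, and no connectivity of $K'$ is needed.
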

		\begin{proof}
		Let $\mathcal{F}(K)=\{f_1, \dots ,f_{p-1}, f_p\}$ and $\mathcal{F}(K')=\mathcal{F}(K) \setminus \{f_p\}$. Let $ \bar{f}_1, \bar{f}_2, \dots , \bar{f}_{p-1}$ be the respective column vectors of $[\partial(K')]$.
		
		Let,
		\begin{equation}
			\sum_{i=1}^{p-1}c_i\bar{f}_i=0, \tag{i}
		\end{equation}
		where $c_i \in \mathbb{Z}$ for each $i \in \{i, \dots ,(p-1)\}$. Without loss of generality, let $c_1 \ne 0$. As we noted earlier, there exists a sequence of faces and edges of $K$, $$ f_1, e_{i_1}, f_{i_{1}}, e_{i_{2}}, \dots ,f_{i_{t-1}}, e_{i_t}, f_p$$ from $f_1$ to $f_p$, where $f_{i_j} \in \mathcal{F}(K)$ and $e_{i_j} \in \mathcal{E}(K)$ for each $j \in [t]$. Without loss of generality we can assume that they are distinct. Since each $e_{i_j}$ is shared by exactly two faces of $K$, which are precisely the cells preceding and following $e_{i_j}$ in the sequence, therefore, in order to cancel the contribution of $e_{i_1}$ in (i), we must have $c_{i_1} \ne 0$. Again, to cancel the contribution of $e_{i_2}$ in (i), we must have $c_{i_2} \ne 0$. In this way we continue, till we have $c_{i_{t-1}} \ne 0$. As a result, $e_{i_t}$ now has a non-trivial contribution in the sum, which cannot be cancelled, $e_{i_t}$ being an external edge of $K'$. This is a contradiction. Therefore $c_1 = 0$. In general, $c_i = 0$ for each $i \in [p-1]$. Thus the column vectors $\bar{f}_1, \dots ,\bar{f}_{p-1}$ are linearly independent and hence the rank of $[\partial(K')]$ is $(p-1)$.
	\end{proof}
	It follows from the above result that the cardinality of any basis of the row space of $[\partial(K')]$ is $(p-1)$, which is same as $|\mathcal{F}(K')|$. Let us consider a basis of the row space of $[\partial(K')]$ and denote the set of edges corresponding to this basis, as $R$. Now we construct a directed graph as follows.\\

	\noindent \textbf{Construction of a directed bipartite graph:} Let $G(K')=(V,E)$ be a directed bipartite graph, where the vertex set $V$ is partitioned as $V= R \sqcup \mathcal{F}(K')$ and the edge set $E := \{(f,e) \mid f \in \mathcal{F}(K'), e \in R, e \subseteq f\}$. We will refer to $G(K')$ as simply $G$, when the surface is clear from the context. As we observed previously, $|R|=|\mathcal{F}(K')|$, which makes $G$ a balanced bipartite directed graph. Let $\mathcal{M} = \{M \mid M \text{ is a perfect matching in } G\}$. We will prove at the end of Section~\ref{sec1} that $\mathcal{M} \ne \emptyset$. Next, let $M \in \mathcal{M}$. For each directed edge $(f,e) \in M$, we reverse the direction of this edge in $G$. We denote this modified bipartite directed graph as $G_{M}$. We call a perfect matching $M$ in $G$ \emph{acyclic} if $G_{M}$ is acyclic. Otherwise, $M$ is called \emph{cyclic}.
	\subsection{Some definitions and results of Graph theory}
	Let $G=(V,E)$ be a directed graph, where $V$ and $E$ denote the set of vertices and the set of directed edges of $G$ respectively. We also denote the set of vertices of $G$ as $V(G)$ and the set of directed edges of $G$ as $E(G)$. Throughout the paper, we will refer to directed edges of a directed graph as simply \emph{edges} and directed cycles as simply \emph{cycles}. For any $u \in V$, let $N_u^{-}=\{v \in V \mid (u,v) \in E \}$ and  $N_u^{+}= \{v \in V \mid (v,u) \in E\}$. Then, the \emph{out-degree} of $u$ in $G$ is defined as $|N_u^{-}|$ and denoted as $\outdeg_G(u)$. The \emph{in-degree} of $u$ in $G$ is defined as $|N_u^{+}|$ and denoted as $\indeg_G(u)$. The elements of $N_u^{-}$ are said to be the \emph{out-neighbours} of $u$ while the elements of $N_u^{+}$ are said to be the \emph{in-neighbours} of $u$. If $G$ has no directed cycles, then we call $G$ a \emph{directed acyclic graph}. The following result is of immense importance, which we will use in the proof of \autoref{exis} in Section~\ref{sec1}.
	
	\begin{prop}\label{dag}
		A directed acyclic graph has atleast one vertex of out-degree $0$. 
	\end{prop}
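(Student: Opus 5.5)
The statement concerns finite directed graphs, which is the setting throughout the paper. I would argue by contradiction using a longest-path (or walk-following) argument. Let $G=(V,E)$ be a finite directed acyclic graph with $V\neq\emptyset$, and suppose every vertex has out-degree at least $1$, i.e.\ $N_u^{-}\neq\emptyset$ for each $u\in V$.

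Start at any vertex $u_0\in V$. Since $\outdeg_G(u_0)\ge 1$, choose $u_1\in N_{u_0}^{-}$. Inductively, having $u_i$, pick $u_{i+1}\in N_{u_i}^{-}$. This produces an infinite directed walk $u_0,u_1,u_2,\dots$ with $(u_i,u_{i+1})\in E$ for all $i$. Because $V$ is finite, the pigeonhole principle gives indices $i<j$ with $u_i=u_j$. Choose such a pair with $j-i$ minimal. Then $u_i,u_{i+1},\dots,u_j=u_i$ is a closed walk with no repeated vertex other than its endpoints, that is, a directed cycle in $G$. (Since the paper's definitions require $(u,v)\in E$ and do not exclude loops, a loop with $j=i+1$ also counts as a cycle.) This contradicts acyclicity, so some vertex has out-degree $0$.

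An equivalent route, which I might present instead, takes a directed path $P=u_0u_1\cdots u_k$ of maximum length. Such a path exists because paths in a finite graph have at most $|V|$ vertices. If $u_k$ had an out-neighbour $w$, then either $w\notin P$, which would extend $P$ and contradict maximality, or $w=u_i$ for some $i$, which would close the directed cycle $u_i\cdots u_k u_i$ and contradict acyclicity. Hence $\outdeg_G(u_k)=0$.

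There is no real obstacle in this argument. The only points needing care are:
\begin{enumerate}[label=(\roman*)]
\item the implicit hypotheses that $G$ is finite and nonempty, which must be made explicit;
\item the minimality step in the first argument, which is what ensures that the repeated vertex yields a genuine directed cycle and not merely a closed walk.
\end{enumerate}
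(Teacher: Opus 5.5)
Your proof is correct: both the minimal-repeat walk argument and the maximal-directed-path argument are complete. The paper gives no proof of its own; it states the proposition as a standard fact with a pointer to West, and your maximal-path argument is the standard textbook proof.

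You are right to make finiteness and nonemptiness explicit, since the statement fails for infinite digraphs such as $n \to n+1$ on $\mathbb{N}$. In the paper's only use of the proposition, in the proof of \autoref{exis}, both hypotheses hold. There it is applied to the finite graph $G_M$ and to the induced subgraphs obtained by deleting matched pairs. These stay acyclic and are nonempty until the procedure terminates.
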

We refer to \cite{West}, for more information on the basics of Graph Theory.
	\section{Enumerating Pfaffian orientations on a punctured cellulated surface}\label{sec1}
	
	Let $K'$ be a punctured cellulated orientable surface. In this section, we construct a system of equations over the field $\mathbb{Z}_2$, for such a cellulated surface, the solutions of which correspond to precisely the Pfaffian orientations on $K'$. Next, we determine the dimension of the solution space, which gives us the number of solutions of this system.
	
	\subsection{Construction of a system of equations}
	 Let $f$ be a face of $K'$. Then for each edge $e$ of $f$, we set a variable $x(f,e)$ which takes values as follows.
	
	$$x(f,e)= \begin{cases}
		1 & \text{if }\langle f,e\rangle =-1, \\
		0 & \text{otherwise.}\\
	\end{cases} $$
	
	Let $K'$ be equipped with an orientation on its edges. If this orientation is Pfaffian, then we observe that the variables with respect to $K'$ as mentioned before, should satisfy the following equations.
	For $f \in \mathcal{F}(K')$ with edges $e_1, e_2, \dots , e_k$, we have,
	 
	\begin{equation}\label{eqn1}
		x(f,e_1) + x(f,e_2) + \dots + x(f,e_k) =1
	\end{equation} 
	For each internal edge $e \in \mathcal{E}(K')$ such that $e$ is shared by the faces $f$ and $g$, we have,
	\begin{equation} \label{eqn2}
		x(f,e) + x(g,e) =1
	\end{equation}
	We note here that this system of equations are taken over $\mathbb{Z}_2$.
	
	We call \ref{eqn1} the set of \emph{face equations} and \ref{eqn2} the set of \textit{edge equations}. The face equations impose the condition of parity of the number of good cells with respect to each face of $K'$, while the edge equations ensure that the orientation on the edges of $K'$ are coherent, which is necessary for orientability of the surface. Hence, we further observe that each solution of the system of equations formed by \ref{eqn1} and \ref{eqn2} corresponds to a Pfaffian orientation on $K'$.
	
	Now, the system of equation can be written as $AX= \mathbb{1}$ where $A$ is the coefficient matrix, $X$ is the variable matrix. Let the dimension of $A$ be $r \times s$. Then $\mathbb{1}$ denotes a $r \times 1$ matrix consisting of all $1$-s. We call $A$ the \textit{Pfaffian matrix} of $K'$. We demonstrate this matrix with the help of an illustrative example.
			\begin{exmp}
		Let us consider a punctured, cellulated torus as depicted in \autoref{torus}. We name it as $\mathbb{T}$.
		
		\begin{figure}
			\centering
			\begin{tikzpicture}[vertex/.style={draw, circle, fill=black, inner sep=0.05pt, minimum size=1.75mm},baseline, scale=1.3]
				
				\node[vertex] [label={[yshift=-0.7cm]\scriptsize{$1$}}] (a) at (0,0) {};
				\filldraw[fill=gray!40!white] (1,1) rectangle (2,2);
				\node[vertex] [label={[xshift=-0.3cm, yshift=-0.5cm]\scriptsize{$4$}}] (b) at (0,1) {};
				\node[vertex] [label={[xshift=-0.3cm, yshift=-0.5cm]\scriptsize{$5$}}] (c) at (0,2) {};
				\node[vertex] [label={[yshift=0 cm]\scriptsize{$1$}}] (d) at (0,3) {};
				\node[vertex] [label={[yshift=-0.7cm]\scriptsize{$2$}}] (e) at (1,0) {};
				\node[vertex] [label={[yshift=-0.7cm]\scriptsize{$3$}}] (f) at (2,0) {};
				\node[vertex] [label={[yshift=-0.7cm]\scriptsize{$1$}}] (g) at (3,0) {};
				\node[vertex] [label={[yshift=0.005cm]\scriptsize{$2$}}](h) at (1,3) {};
				\node[vertex] [label={[yshift=0.005cm]\scriptsize{$3$}}](i) at (2,3) {};
				\node[vertex] [label={[yshift=0cm]\scriptsize{$1$}}] (j) at (3,3) {};
				\node[vertex] [label={[xshift=0.3cm, yshift=-0.5cm]\scriptsize{$5$}}] (m) at (3,2) {};
				\node[vertex] [label={[xshift=0.3cm, yshift=-0.5cm]\scriptsize{$4$}}] (k) at (3,1) {};
				\node[vertex] [label={[xshift=-0.3cm, yshift=-0.6cm]\scriptsize{$10$}}](l) at (1,1) {};
				\node[vertex] [label={[xshift=0.2cm, yshift=-0.6cm]\scriptsize{$9$}}](n) at (2,1) {};
				\node[vertex] [label={[xshift=-0.2cm, yshift=-0.6cm]\scriptsize{$7$}}](o) at (1,2) {};
				\node[vertex] [label={[xshift=0.2cm, yshift=-0.6cm]\scriptsize{$8$}}](p) at (2,2) {};
				\node at (0.5, 0.5) {$f_7$};
				\node at (1.5, 0.5) {$f_6$};
				\node at (2.5, 0.5) {$f_5$};
				\node at (2.5, 1.5) {$f_4$};
				\node at (2.5, 2.5) {$f_3$};
				\node at (1.5, 2.5) {$f_2$};
				\node at (0.5, 2.5) {$f_1$};
				\node at (0.5, 1.5) {$f_8$};
				\node at (1.5, 1.5) {$f_9$};
				
				\draw (a)--(b);
				\draw (b)--(c);
				\draw (c)--(d);
				\draw (a)--(e);
				\draw (e)--(f);
				\draw (f)--(g);
				\draw (b)--(l);
				\draw (l)--(n);
				\draw (n)--(k);
				\draw (c)--(o);
				\draw (o)--(p);
				\draw (p)--(m);
				\draw (d)--(h);
				\draw (h)--(i);
				\draw (i)--(j);
				\draw (g)--(k);
				\draw (k)--(m);
				\draw (m)--(j);
				\draw (e)--(l);
				\draw (f)--(n);
				\draw (o)--(h);
				\draw (p)--(i);

			\end{tikzpicture}
			\caption{A punctured cellulated torus where the shaded cell $f_9$ represents the punctured face.} \label{torus}
		\end{figure}

		Next, we form the face equations and edge equations of $\mathbb{T}$ as explained above, and obtain a system of equations $AX = \mathbb{1}$, where the Pfaffian matrix is of dimension $14*32$ and  given by,
		
		$$ A =
		\setcounter{MaxMatrixCols}{30} 
		\tiny
		\begin{bmatrix}
			(f_{1},12) & (f_{1},27) & (f_{1},57) & (f_{1},15) & (f_{2},27) & (f_{2},23)& (f_{2},38) & (f_{2},78) & & \cdots & & (f_{8},57) & (f_{8},(7,10)) & (f_{8},(4,10)) & (f_{8},45)   \\ 
			1 & 1 & 1 & 1 & 0 & 0 & 0 & 0 & & \cdots & & 0 & 0 & 0 & 0  \\ 
			0 & 0 & 0 & 0 & 1 & 1 & 1 & 1 & & \cdots & &0 & 0 & 0 & 0 \\
			\vdots &  &  &  &  &  &  &  & & & &  &  &  &   \\
			0 & 0 & \cdots & & & & & & & & \cdots & 1 & 1 & 1 & 1\\
			1 & 0 & 0 & 0 & \cdots &  &  &  &\cdots& 1 & \cdots & 0 & 0 & 0 & 0\\
			0 & 1 & 0 & 0 & 1 & 0 & 0& &\cdots& & & 0 & 0 & 0 & 0 \\
			0 & 0 & 1 & 0 & \cdots & & & & & & & 1 & 0 & 0 & 0 \\
			0 & 0 & 0 & 1 & \cdots & & & & 1 & \cdots& & 0 & 0 & 0& 0 \\
			\vdots & & & & & & & & & & & 0 & 0 & 0 & 1 \\
			
		\end{bmatrix}$$  and $$ X = \begin{bmatrix}
			x(f_{1},12)\\
			x(f_{1},27)\\
			x(f_{1},57)\\
			x(f_{1},15)\\
			\vdots\\
			x(f_{8},45)\\
		\end{bmatrix}. $$

	\end{exmp}
	Hence we arrive at the following result.
	
	\begin{lem}\label{count}
		Let $K'$ be a punctured orientable cellulated surface and $A$ be its Pfaffian matrix. Then each solution of the system of equation $AX= \mathbb{1}$ corresponds to a Pfaffian orientation on $K'$ and vice versa. Hence, the number of Pfaffian orientations on $K'$ is given by the number of solutions of this system of equations.
	\end{lem}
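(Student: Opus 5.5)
We will prove Lemma~\ref{count} by exhibiting an explicit bijection between edge orientations of $K'$ satisfying the Pfaffian condition and $\mathbb{Z}_2$-vectors $X$ with $AX=\mathbb{1}$. The key fact is that every edge of $K'$ lies in one or two faces. On the closed surface $K$, an edge inherits opposite induced orientations from its two faces, because the face orientations come from the outward normal and are therefore coherent.

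\textbf{From orientations to solutions.} Fix an orientation $\omega$ of the edges of $K'$. For each incident pair $(f,e)$ set $x(f,e)=1$ if $\langle f,e\rangle=-1$ and $x(f,e)=0$ otherwise; this defines a vector $X_\omega$.
\begin{enumerate}[label=(\arabic*)]
\item If $\omega$ is Pfaffian, each face equation~\eqref{eqn1} holds. Indeed, its left side counts modulo $2$ the edges of $f$ oriented against $f$, and that count is odd.
\item Each edge equation~\eqref{eqn2} holds automatically. For an internal edge $e\subseteq f\cap g$, coherence gives that $f$ and $g$ induce opposite orientations on $e$. The fixed arrow on $e$ therefore agrees with exactly one of them, so $\langle f,e\rangle=-\langle g,e\rangle$ and $x(f,e)+x(g,e)=1$.
\end{enumerate}

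\textbf{From solutions to orientations.} Conversely, let $X$ be a solution of $AX=\mathbb{1}$. For each edge $e$, choose any face $f\supseteq e$ of $K'$ and orient $e$ along the induced orientation of $f$ if $x(f,e)=0$, and opposite to it if $x(f,e)=1$.
\begin{enumerate}[label=(\arabic*)]
\item \emph{Well-definedness.} This is the only point needing care. For an internal edge shared by $f$ and $g$, the edge equation forces $x(g,e)=1+x(f,e)$. Since $g$ induces the orientation opposite to that of $f$, the prescription via $g$ gives the same arrow. An external edge lies in a unique face of $K'$, so no choice arises.
\item \emph{Pfaffian property.} With this orientation, $x(f,e)=1$ holds exactly when $\langle f,e\rangle=-1$. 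The face equations then say that each face has an odd number of edges oriented against it.
\end{enumerate}

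\textbf{Bijectivity.} The two maps are mutually inverse. Orientation, then values, then orientation returns the same arrows. Values, then orientation, then values recovers every $x(f,e)$: for the chosen face this is by construction, and for the other face of an internal edge it follows from the edge equation. Hence the number of Pfaffian orientations equals the number of solutions of $AX=\mathbb{1}$.

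The main obstacle is justifying coherence, namely that the two faces sharing an edge induce opposite orientations on it. We would argue this from the orientability of $K$ together with the outward-normal right-hand-rule convention; it is the standard fact that $\partial$ of the fundamental class vanishes.
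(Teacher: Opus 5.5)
Your proof is correct and follows the same route as the paper. The paper justifies this lemma only through the informal remarks that precede it: the face equations encode the odd parity of good edges, and the edge equations force the two variables at an internal edge to come from a single arrow, given that adjacent faces induce opposite orientations. Your explicit bijection, with its well-definedness and mutual-inverse checks, makes that argument precise. The coherence fact you single out is likewise taken for granted in the paper, which invokes it again, citing orientability, as $\langle f,e\rangle=-\langle f',e\rangle$ in the proof of Lemma~\ref{inv}.
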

	So, in order to find out the number of Pfaffian orientations on $K'$, it suffices now to find the number of solutions of this system of equations, which leads us to the next subsection.
	
	\subsection{Enumerating Pfaffian orientations on a punctured, cellulated, orientable surface}
	Let $A$ be the Pfaffian matrix of $K'$. In this subsection, we first show that $A$ can be reduced to an equivalent matrix $A'$. Next, we find out $\nullity(A')$ and hence, determine the number of solutions of the system $AX=\mathbb{1}$.
	The following theorem states how the Pfaffian matrix of a punctured cellulated surface can be reduced to another equivalent matrix. For any matrix $B$, we denote the transpose of $B$ as $B^{T}$.
	
	\begin{prop}\label{pfaf}
		Let $K'$ be a punctured orientable cellulated surface with $|\mathcal{F}(K')|=(p-1)$ and $|\mathcal{E}(K')|=d$. Let $A$ be the Pfaffian matrix of $K'$ of order $r \times s$ and $AX=\mathbb{1}$  be the corresponding system of equations. Then $A$ can be reduced to another equivalent matrix $A'$ where,
		$$ A' = \begin{bmatrix}
			[\partial(K')]^{T} & O\\
			O &  I_{s-d}\\
		\end{bmatrix}, $$
		and $I_{s-d}$ is the identity matrix of order $(s-d)$, by a sequence of column and row operations.
	\end{prop}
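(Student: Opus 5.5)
The plan is to work over $\mathbb{Z}_2$ throughout. There every incidence number $\pm 1$ of $[\partial(K')]$ becomes $1$, so $[\partial(K')]^{T}$ is read as the $(p-1)\times d$ face–edge containment matrix mod $2$. The first step is to count dimensions. Every edge of $K'$ lies in one or two faces of $K'$. Write $d_{\mathrm{int}}$ and $d_{\mathrm{ext}}$ for the numbers of internal and external edges. Then the columns of $A$, indexed by pairs $(f,e)$ with $e\subseteq f$, number $s=2d_{\mathrm{int}}+d_{\mathrm{ext}}$. Since $d=d_{\mathrm{int}}+d_{\mathrm{ext}}$, we get $s-d=d_{\mathrm{int}}$. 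The rows number $r=(p-1)+d_{\mathrm{int}}$: one face equation per face and one edge equation per internal edge. Hence the proposed $A'$ has size $\bigl((p-1)+(s-d)\bigr)\times\bigl(d+(s-d)\bigr)=r\times s$, as required.

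Next, for each edge $e$ I will choose a \emph{representative column}. If $e$ is external, lying only in the face $f$, the representative is $(f,e)$. Its only nonzero entry is a $1$ in face row $f$, since no edge equation involves $e$, so this column already equals the column of $[\partial(K')]^{T}$ indexed by $e$. If $e$ is internal, shared by faces $f$ and $g$, I fix one of them, say $f$, and call $(f,e)$ the representative and $(g,e)$ the \emph{companion}. Both columns have exactly two $1$'s: column $(f,e)$ in face row $f$ and edge row $e$, column $(g,e)$ in face row $g$ and edge row $e$. I now add column $(g,e)$ to column $(f,e)$. The new representative column has $1$'s exactly in face rows $f$ and $g$ and a $0$ in edge row $e$, which is precisely the column of $[\partial(K')]^{T}$ for $e$. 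After this step, edge row $e$ has a single nonzero entry, located in the companion column $(g,e)$. I then add edge row $e$ to face row $g$. This clears the $1$ in position (face row $g$, column $(g,e)$), so the companion column becomes a unit vector supported on edge row $e$. The row operation does not disturb any representative column, because edge row $e$ is now zero outside column $(g,e)$.

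The main point to verify is that these operations for different edges do not interfere. The operations for edge $e$ touch only the two columns $(f,e)$ and $(g,e)$ and the single edge row $e$. Different edges have disjoint sets of columns and distinct edge rows. Moreover, the row operation for $e$ adds a row that, at the moment it is used, is supported only in column $(g,e)$. So the operations can be performed edge by edge in any order. Finally, I permute rows so that the face rows come first and the edge rows after them, and permute columns so that the $d$ representative columns come first and the $d_{\mathrm{int}}=s-d$ companion columns after them, ordered to match the edge rows. This gives exactly the block form $\begin{bmatrix}[\partial(K')]^{T} & O\\ O & I_{s-d}\end{bmatrix}$.

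One remark belongs with the proof, since the matrix will later be used for counting solutions. Column operations amount to an invertible change of variables. Row operations also alter the right-hand side $\mathbb{1}$, so they must be applied to it at the same time. Neither affects the nullity, and consistency is preserved.
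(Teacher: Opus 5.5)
Your proposal is correct and takes essentially the paper's route. In both, the companion column is added to the representative column for each internal edge, the companion columns are moved to the end and matched to the edge rows, and their face-row entries are then cleared using the edge rows (the paper's $D$-reduction via $I_{s-d}$). Your version makes explicit several points the paper leaves implicit: the mod-$2$ reading of $[\partial(K')]^{T}$, the count $s-d=d_{\mathrm{int}}$, the fact that the edge rows vanish on the representative columns so the operations don't interfere, and the need to carry the right-hand side $\mathbb{1}$ along.
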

		\begin{proof}
		
		First, we order the faces of $K'$ according to the order of the face equations in the system of equations $AX= \mathbb{1}$. Let this order be $f_1, \dots ,f_{p-1}$. Similarly, we order the internal edges of $K'$ according to the order of the edge equations, as $e_{j_1}, \dots ,e_{j_l}$. We recall that the first $(p-1)$ rows of $A$ correspond to the face equations while the remaining rows correspond to the edge equations. 
		
		Let $C_{ij}$ denote the column of coefficients of the variable $x(f_i,e_j)$. Thus, each column $C_{ij}$ of $A$ is of the following form. If $e_j$ is an internal edge, then let $e_j=e_{j_t}$ for some $t \in [l]$. In this case, $C_{ij}$ is of the form,
		\[
		\begin{tikzpicture}[baseline=(m.center), scale=0.5]
			\matrix (m) [matrix of math nodes,left delimiter={[},right delimiter={]}] {
				0 \\
				\vdots \\
				0 \\
				1 \\
				0 \\
				\vdots \\
				0 \\
				1 \\
				0 \\
				\vdots \\
				0 \\
			};
			
			\draw[->] (m-4-1.east) -- ++(1,0) node[right] {$i^{th}$ position};
			
			\draw[->] (m-8-1.east) -- ++(1,0) node[right] {$(p-1+t)^{th}$ position};
			
		\end{tikzpicture}.
		\]
		However, if $e_j$ is a external edge, then $C_{ij}$ is of the following form,
		\[
		\begin{tikzpicture}[baseline=(m.center), scale=0.5]
			\matrix (m) [matrix of math nodes,left delimiter={[},right delimiter={]}] {
				0 \\
				\vdots \\
				0\\
				1 \\
				0 \\
				\vdots \\
				0 \\
			};
			
			\draw[->] (m-4-1.east) -- ++(1,0) node[right] {$i^{th}$ position};
			
		\end{tikzpicture}.
		\]
		Now we perform a sequence of column operations. First we take $i=1$. For each $C_{1j}$, if $e_j$ is an internal edge and shared by another face $f_k$, $k >1$, then we perform the following column operation on $C_{1j}$, $C_{1j}'= C_{1j} + C_{kj}$. Now the $i^{th}$ and $k^{th}$ entry of $C_{1j}'$ are $1$ while the remaining entries are $0$. If, $e_j$ is a external edge, then we keep $C_{1j}$ unaltered.
		
		In general, for $i > 1$, we have the following scheme. Let $e_j$ be an internal edge and shared by another face $f_k$. If $k > i$, then we perform the column operation $C_{ij}'=C_{ij}+C_{kj}$, whereby, the $i^{th}$ and $k^{th}$ entry of $C_{ij}'$ become $1$ and the remaining entries become $0$. If $k < i$, then we move $C_{ij}$ to the terminal position in the matrix. If $e_j$ is a external edge, we keep $C_{ij}$ unaltered. We carry out this procedure for each $i \in \{2, \dots ,(p-1)\}$. Let us name this column-transformed matrix as $B$.
		
		Now, we observe that for each $C_{ir}, C_{ks}$ in the first $d$-columns of $B$, $r \ne s$. Further, each $C_{ij}$ in the first $d$ columns is of the following form. If $e_j$ is shared by $f_i$ and $f_k$, the the $i^{th}$ and $k^{th}$ entries of $C_{ij}$ are $1$ while the remaining are $0$. If $e_j$ is a external edge, then the $i^{th}$ entry of $C_{ij}$ is $1$ while the remaining entries are $0$.  Thus the submatrix formed by the first $d$ columns of $B$, is of the following form,
		
		\[\begin{bmatrix}
			[\partial(K')]^{T}\\
			O \\
		\end{bmatrix}
		\]
		where, we note, the dimension of the null matrix is $(r-p+1) \times d$.
		We now observe that for each $C_{ij}$ in the remaining $(s-d)$ columns of $A$, $e_j$ is an internal edge. Further, for each $C_{ij}, C_{kl}$ in these $(s-d)$ columns, $j \ne l$. So, we observe that $(s-d)$ corresponds to precisely the number of internal edges of $K'$, which is again same as the number of edge equations, $(r-p+1)$. We recall that the internal edges are ordered as $e_{j_1}, \dots , e_{j_l} $. We now shift these $(s-d)$ columns amongst themselves by successive column operations, so that they are ordered as $C_{i_1j_1}, C_{i_2j_2}, \dots , C_{i_lj_l}$. Since as noted before, $(s-d)=(r-p+1)$, therefore according to the property of these columns as stated at the beginning of the proof, we observe that the submatrix formed by the terminal $(s-d)$ columns after the aforementioned column operations on the terminal $(s-d)$ columns of $B$, is of the following form.
		
		\[\begin{bmatrix}
			D\\
			I_{s-d}\\
		\end{bmatrix}
		\]
		where $D$ is a binary matrix. Now, we can apply suitable row operations on $D$  using $I_{s-d}$, so as to reduce $D$ to a null matrix. Therefore, $B$ is reduced to $A'$, which is of the following form.
		$$ A' = \begin{bmatrix}
			[\partial(K')]^{T} & O\\
			O &  I_{s-d}\\
		\end{bmatrix} $$
		
		This completes the proof.
	\end{proof}
	
	Now, we determine $\nullity(A')$ using the following lemma.
	
			\begin{lem}\label{nullity}
		Let $|\mathcal{F}(K')|=(p-1)$, $|\mathcal{E}(K')|=d$ and $A'$ be the matrix obtained from the Pfaffian matrix of $K'$, as asserted in \autoref{pfaf}. Then $\nullity(A')= d-p+1$.
	\end{lem}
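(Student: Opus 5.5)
The plan is to read off the nullity of $A'$ from its block-diagonal shape via rank--nullity, working over $\mathbb{Z}_2$ throughout, since that is the field over which the system $AX=\mathbb{1}$ and the reductions of \autoref{pfaf} are carried out. The matrix $A'$ has $s$ columns. Because it is block diagonal with blocks $[\partial(K')]^{T}$ and $I_{s-d}$, its rank is
\[
\operatorname{rank}(A') = \operatorname{rank}\bigl([\partial(K')]^{T}\bigr) + (s-d).
\]
So $\nullity(A') = s - \operatorname{rank}(A') = d - \operatorname{rank}\bigl([\partial(K')]^{T}\bigr)$. It therefore suffices to show that $[\partial(K')]$ has rank exactly $p-1$ over $\mathbb{Z}_2$. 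Here the entries $\pm 1$ both reduce to $1$, so the matrix is just the edge--face containment matrix.

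The rank is at most $p-1$ because $[\partial(K')]$ has only $p-1$ columns. For the lower bound I would invoke \autoref{li}, but its proof is phrased over $\mathbb{Z}$, so I would check that it goes through over $\mathbb{Z}_2$. This is the one genuine point to verify. I would redo the argument with supports rather than with the successive cancellation chain:
\begin{enumerate}[label=(\roman*)]
\item Suppose $\sum_{i} c_i \bar f_i = 0$ over $\mathbb{Z}_2$ with not all $c_i$ zero, and let $S=\{f_i : c_i = 1\}\neq\emptyset$.
\item By property (b) there is a face--edge sequence from some $f\in S$ to the removed face $f_p$. Let $e$ be the first edge in this sequence at which we pass from a face in $S$ to a face not in $S$; such an edge exists since $f_p\notin S$.
\item By property (a), $e$ lies in exactly two faces of $K$. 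If the outgoing face is $f_p$, then $e$ is an external edge of $K'$ and lies in exactly one face of $K'$, which is in $S$. Otherwise $e$ is internal, with one face in $S$ and the other not.
\item In both cases the $e$-coordinate of $\sum_i c_i\bar f_i$ is exactly $1 \neq 0$, a contradiction.
\end{enumerate}
Hence the columns are independent over $\mathbb{Z}_2$ and the rank is $p-1$.

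Combining the two steps gives $\nullity(A') = d-(p-1) = d-p+1$, as claimed. The only subtle step is the field change in \autoref{li}, which the support argument above handles uniformly. The rest is bookkeeping from the block structure established in \autoref{pfaf}.
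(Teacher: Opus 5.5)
Your proposal is correct and follows the paper's proof. You split the rank of the block-diagonal $A'$ as $\operatorname{rank}([\partial(K')]^{T}) + (s-d)$, then use \autoref{li} to get $\operatorname{rank}([\partial(K')]) = p-1$. Your support-based re-check of \autoref{li} over $\mathbb{Z}_2$ is a worthwhile addition. The paper states that lemma with integer coefficients, while the solution count needs the rank over $\mathbb{Z}_2$. The paper's chain argument does in fact work over any field, since $\pm c_f \pm c_g = 0$ forces $c_g \neq 0$ whenever $c_f \neq 0$.
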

	\begin{proof}
	 Suppose the dimension of $A$ is $r \times s$. We recall that $A'$ is of the form,
		
		$$ A' = \begin{bmatrix}
			[\partial(K')]^{T} & O\\
			O &  I_{s-d}\\
		\end{bmatrix} $$
		Let,
		$$ B= \begin{bmatrix}
			[\partial(K')]^{T}\\
			O \\
		\end{bmatrix} $$
		It follows that the set of all the linearly independent columns of $A'$ is comprised of precisely, the terminal $(s-d)$ columns of $A'$ and the linearly independent columns of $B$, since the linear independence of the former is not perturbed by the latter and vice versa. From \autoref{li}, we know that rank of $B$ is $(p-1)$. Therefore, \begin{equation*}
		\begin{aligned}
			\nullity(A')&= s - ((s-d) +(p-1))\\ &= d- p+1
		\end{aligned}
		\end{equation*}
	\end{proof}
	
	Next, we prove the existence of a Pfaffian orientation on a punctured, cellulated, orientable surface and hence determine the number of Pfaffian orientations on such a surface using \autoref{nullity}.
	 
	We mention here, that Kastelyn has proved the existence of Pfaffian orientations in planar graphs. However, his proof will not work for Pfaffian orientations on punctured cellulated surfaces in general, since he used planarity to prove his result. We will use the notion of the directed bipartite graph and the associated acyclic perfect matchings on it, as described in Section~\ref{prelim}, to provide an explicit construction of a Pfaffian orientation on any punctured, orientable, cellulated surface. As before, let $K'$ be a punctured, orientable, cellulated surface, with $|\mathcal{F}(K')|=(p-1)$.
	
	First we recall our construction from Section~\ref{prelim}. We consider a basis of the row space of $[\partial(K')]$ and name the set of edges corresponding to this basis as $R$. We construct a directed balanced bipartite graph $G=(V,E)$, where $V= R \sqcup \mathcal{F}(K')$, $E := \{(f,e) \mid f \in \mathcal{F}(K'), e \in R, e \subseteq f\}$ and both $R$ and $\mathcal{F}$ are given an ordering on their vertices. The set of all possible perfect matchings on $G$ is denoted as $\mathcal{M}$. We further recall that each $M \in \mathcal{M}$ gives rise to a modified directed bipartite graph $G_M$. If $G_M$ is acyclic, we call $M$ an acyclic matching. Otherwise, $M$ is called cyclic.   
	
	Now, for each $M \in \mathcal{M}$ in $G$, we associate a permutation $\pi_M$ as follows. If $(f_j,e_i) \in M$, where $f_j$ is the $j^{th}$ vertex in $\mathcal{F}(K')$ and $e_i$ is the $i^{th}$ vertex in $R$, then $\pi_M(i)=j$. We define a sign on $M$ as $\Lambda(M)= \sgn(\pi_M)\prod_{(f,e) \in M}\langle f, e \rangle$. The following lemma provides a crucial information on the sum of the signs of the perfect matchings in $G$, which establishes the existence of a perfect matching in $G$. 
	
	\begin{lem}\label{sign}
		$\sum_{M \in \mathcal{M}}\Lambda(M) \ne 0$.
	\end{lem}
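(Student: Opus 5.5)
The idea is to recognise $\sum_{M \in \mathcal{M}}\Lambda(M)$ as the Leibniz expansion of a determinant. Let $S$ be the square submatrix of $[\partial(K')]$ formed by the rows of the edges in $R$, keeping all $p-1$ columns. Its rows and columns are indexed by $R$ and $\mathcal{F}(K')$ in their chosen orderings. By construction, $R$ indexes a basis of the row space of $[\partial(K')]$, and \autoref{li} says this row space has dimension $p-1$. So $S$ has $p-1$ linearly independent rows, hence rank $p-1$, and therefore $\det S \neq 0$ over $\mathbb{Q}$.

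Now expand by the Leibniz formula:
$$\det S=\sum_{\pi}\sgn(\pi)\prod_{i} S_{i,\pi(i)}=\sum_{\pi}\sgn(\pi)\prod_i \langle f_{\pi(i)}, e_i\rangle,$$
where the sum runs over bijections $\pi$ from the indices of $R$ to those of $\mathcal{F}(K')$. A term is nonzero exactly when $\langle f_{\pi(i)},e_i\rangle\neq 0$ for every $i$, i.e.\ when $e_i\subseteq f_{\pi(i)}$ for all $i$. This says precisely that $\{(f_{\pi(i)},e_i)\}$ is a set of edges of $G$ meeting every vertex of $R$ and of $\mathcal{F}(K')$ exactly once, that is, a perfect matching $M$ with $\pi_M=\pi$.

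The map $M\mapsto \pi_M$ is a bijection between $\mathcal{M}$ and the permutations whose Leibniz term is nonzero. For such a term the summand equals $\sgn(\pi_M)\prod_{(f,e)\in M}\langle f,e\rangle=\Lambda(M)$. The remaining terms vanish, so $\sum_{M\in\mathcal{M}}\Lambda(M)=\det S\neq 0$, which proves the lemma. In particular $\mathcal{M}\neq\emptyset$.

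The main point needing care is that $S$ is genuinely nonsingular. This is where \autoref{li} enters: the column rank $p-1$ equals the row rank, so a row basis has exactly $p-1$ elements and the corresponding submatrix is square of full rank. Some minor bookkeeping is also needed, namely that the indexing convention $\pi_M(i)=j$ for $(f_j,e_i)\in M$ matches the row/column convention of the determinant. A different convention would only replace $\pi$ by $\pi^{-1}$, which has the same sign.

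It is also worth noting that the computation should be done over $\mathbb{Q}$ (or $\mathbb{Z}$), as in \autoref{li}, and not over $\mathbb{Z}_2$. The entries $\langle f,e\rangle\in\{0,\pm1\}$ and the nonvanishing argument uses integer linear independence.
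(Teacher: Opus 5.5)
Your proposal is correct and is essentially the paper's proof. You take the square submatrix $\partial_R$ on the basis rows $R$, get $\det\partial_R\neq 0$ from \autoref{li} via row rank equals column rank, and identify the nonvanishing Leibniz terms with perfect matchings of $G$, so that $\det\partial_R=\sum_{M\in\mathcal{M}}\Lambda(M)$. Your remarks on the $\pi$ versus $\pi^{-1}$ convention and on working over $\mathbb{Q}$ are bookkeeping the paper leaves implicit; the field caution is in fact unnecessary, since $[\partial(K')]^{T}$ is a row-deleted directed-graph incidence matrix, hence totally unimodular, so $\det\partial_R=\pm 1$.
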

	
	\begin{proof}
			Let us consider $[\partial(K')]$. Suppose, $R$ is the set of edges corresponding to a basis of the row space of $[\partial(K')]$. Let us denote the submatrix formed by all the columns of $[\partial(K')]$ and the rows of this basis as $\partial_R$. Without loss of generality, let us assume that the rows and columns of $\partial_R$ are arranged in the order of the vertices of $R$ and $\mathcal{F}(K')$ in $G(K')$ respectively. Therefore from \autoref{li}, we deduce that $\det(\partial_R(K')) \ne 0$. We observe here that $\sum_{M \in \mathcal{M}}\Lambda(M)= \det(\partial_R(K'))$. Indeed, from the definition of determinant, we have,
		
		\begin{equation*}
			\begin{aligned}
				\det(\partial_R) &= \sum_{\sigma \in S_n} \sgn(\sigma) \prod_{i=1}^{|R|}a_{i\sigma(i)}\\ & = \sum_{\sigma \in S_n} \sgn(\sigma) \prod_{i=1}^{|R|} \langle f_{\sigma(i)}, e_i \rangle.\\
			\end{aligned}
		\end{equation*}
		We observe that the summands in this expression survive only when $e_i \subseteq f_{\sigma(i)}$ for each $i \in \{1, \dots , |R|\}$. Equivalently, a summand corresponding to some $\sigma \in S_n$ in the above expression is nontrivial iff $\{(f_{\sigma(i)},e_i) \mid i=1, \dots , |R|\}$ forms a perfect matching in $G(K')$. The permutation associated with this perfect matching is precisely $\sigma$. Thus the expression reduces to, 
		\begin{equation*}
			\begin{aligned}
				\det(\partial_R) &= \sum_{M \in \mathcal{M}} \sgn(\pi_M) \prod_{(f,e) \in M}\langle f, e \rangle\\ & = \sum_{M \in \mathcal{M}} \Lambda(M).
			\end{aligned}
		\end{equation*}
		Hence, the result follows.
	\end{proof}
	
	Next, we construct a sign-inversing involution on the set of cyclic matchings of $G$, thereby establishing the existence of an acyclic perfect matching in $G$. This acyclic matching will help us to provide a Pfaffian orientation on $K'$.
	
	From here on, an ordered pair $(e,f)$ will denote a directed edge from $e \in R$ to $f \in \mathcal{F}$ and $(f,e)$  will denote a directed edge from $f \in \mathcal{F}$  to  $e \in R$.
	
	We mention here that the proofs of \autoref{inv} and \autoref{acyc} have been adapted from similar results in \cite{Mukherjee}. Before delving into these results, we note a couple of observations which will be useful subsequently. These observations follow directly from the construction of $G_M$ and the properties of cellulated surfaces, as stated at the beginning of Section~\ref{prelim}.
	\begin{obs}\label{deg}
		Let $M$ be a perfect matching in $G$. Then, 
		\begin{enumerate}[label=(\alph*)]
			\item For each $v \in R$, $\outdeg_{G_M}(v)= \indeg_{G_M}(v)=1$.
			\item For each $v \in \mathcal{F}$, $\indeg_{G_M}(v)=1$.
		\end{enumerate}
	\end{obs}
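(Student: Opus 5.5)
The plan is to read off all three degree statements directly from how $G$ and $G_M$ are built, using property (a) of Section~\ref{prelim} (each edge of $K$ lies in exactly two faces of $K$). Recall that every edge of $G$ has the form $(f,e)$ with $f\in\mathcal{F}(K')$, $e\in R$ and $e\subseteq f$. Hence in $G$ every face vertex is a source and every edge vertex is a sink. Passing to $G_M$ reverses exactly the $|R|=|\mathcal{F}(K')|$ edges of the perfect matching $M$ and changes nothing else. So each degree in $G_M$ equals its degree in $G$, corrected by the single matched edge at that vertex.

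For (b), I take $f\in\mathcal{F}(K')$. In $G$ it has in-degree $0$, since no edge of $G$ enters a face vertex. Because $M$ is perfect, exactly one edge $(f,e)\in M$ is incident to $f$, and its reversal $(e,f)$ is the only edge of $G_M$ entering $f$. Thus $\indeg_{G_M}(f)=1$.

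For (a), I take $e\in R$. In $G$ its out-degree is $0$, and $M$ matches it to exactly one face $f_e$. After reversal, $(e,f_e)$ is the unique edge leaving $e$, so $\outdeg_{G_M}(e)=1$. The in-neighbours of $e$ in $G_M$ are exactly the faces containing $e$ other than $f_e$. By property (a), $e$ lies in exactly two faces of $K$, so there is exactly one face other than $f_e$ containing $e$. Counting that face as the in-neighbour gives $\indeg_{G_M}(e)=1$.

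The main obstacle is that last step when $e$ is an external edge of $K'$. Such an $e$ lies in only one face of $K'$, namely $f_e$, and its second face in $K$ is the removed face $f_p$, which is not a vertex of $G$. So the count above gives $\indeg_{G_M}(e)=1$ outright only for internal edges of $R$.

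I would first try to choose $R$ consisting only of internal edges. I expect this to fail. On the $2$-sphere, the signed sum $\sum\pm\bar f_i$ of the face columns equals minus the boundary of $f_p$, which vanishes on every internal edge. So the internal-edge rows have rank at most $p-2$, and the proof of \autoref{li} already shows that external edges are needed for full rank.

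My fallback is to adopt, for the purposes of this observation and its use in \autoref{inv} and \autoref{acyc}, the convention that an external edge $e\in R$ is fed by the puncture $f_p$ as its in-neighbour. This is what property (a) of $K$ literally supplies. Under that convention the count $2-1=1$ holds uniformly for every $v\in R$, and I would then check that the later arguments only use this degree fact along cycles. Since no cycle can pass through the puncture, the convention would not affect them.
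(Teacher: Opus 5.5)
Your argument for (b), and for $\outdeg_{G_M}(v)=1$ when $v\in R$, is the argument the paper intends. The paper's only justification is that the observation follows from the construction of $G_M$ and property (a) of Section~\ref{prelim}: every edge of $K$ lies in exactly two faces of $K$.

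Your objection to the in-degree half of (a) is correct, and it is a defect of the statement rather than of your argument. Property (a) is about $K$, not $K'$. If $e\in R$ is external, its only neighbour in $G$ is the unique face $f$ of $K'$ containing it. So every perfect matching contains $(f,e)$, and $\indeg_{G_M}(e)=0$.

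Your rank argument that $R$ must contain an external edge is also right, and it is not special to the sphere. With the outward-normal orientation, each internal-edge row of $[\partial(K')]$ has exactly two nonzero entries, $\langle f,e\rangle=-\langle g,e\rangle$, so it is orthogonal to the all-ones vector. Hence the internal rows span at most $p-2$ dimensions. By Lemma~\ref{li}, the row space has dimension $p-1$, so every basis needs an external edge. Part (a) therefore fails for every admissible $R$ and every perfect matching $M$, and cannot be proved as written.

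I would push back on your fallback. Declaring the puncture $f_p$ an in-neighbour of the external edges proves nothing about $G_M$ as defined, since $f_p$ is not a vertex of $G$; it changes the object being studied. State the true fact instead: for $v\in R$, $\outdeg_{G_M}(v)=1$ and $\indeg_{G_M}(v)\le 1$, with equality exactly when $v$ is internal.

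This weaker statement is all the later arguments need.
\begin{itemize}
\item In Lemma~\ref{inv}, vertex-disjointness of cycles needs only in-degree at most $1$ everywhere, because a vertex's predecessor on any cycle is then its unique in-neighbour.
\item Also in Lemma~\ref{inv}, external edges have in-degree $0$, so they lie on no cycle. Every edge vertex on a cycle is therefore internal, and the identity $\langle f,e\rangle=-\langle f',e\rangle$ used for the sign flip applies.
\item Theorem~\ref{exis} uses only $\outdeg_{G_M}(v)=1$ for $v\in R$ and $\indeg_{G_M}(f)=1$ for $f\in\mathcal{F}$, and both hold as stated.
\end{itemize}
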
 
	\begin{lem}\label{inv}
		$$\sum_{\substack{M \in \mathcal{M} \\ M \text{ is cyclic}}}\Lambda(M)=0.$$
	\end{lem}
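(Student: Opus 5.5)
We construct a sign-reversing involution $\Phi$ on the set of cyclic perfect matchings of $G$, satisfying $\Lambda(\Phi(M))=-\Lambda(M)$. Cyclic matchings then cancel in pairs and the sum vanishes. This is the standard Gessel--Viennot / Zeilberger style argument, adapted as in \cite{Mukherjee}.

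\textbf{Structure of cycles in $G_M$.} By \autoref{deg}, every vertex of $R$ has in-degree and out-degree $1$ in $G_M$. Its unique out-edge is the reversed matching edge $(e,f)$ with $(f,e)\in M$, and its unique in-edge is the non-matching edge $(f',e)$. Every face vertex has in-degree $1$, coming from its matched edge. Hence a directed cycle in $G_M$ alternates $e_1\to f_1\to e_2\to f_2\to\dots\to e_k\to f_k\to e_1$. Here $(f_i,e_i)\in M$ and $(f_i,e_{i+1})\notin M$, i.e.\ $e_{i+1}\subseteq f_i$.

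\textbf{Canonical cycle and the involution.} Using the fixed orderings on $R$ and $\mathcal{F}(K')$, choose a canonical cycle $C(M)$ in $G_M$. For instance, take the cycle passing through the smallest vertex of $R$ lying on some cycle; the cycle through a vertex of $R$ is unique, since out-degree is $1$ on $R$ and we just follow the reversed matching edges. Define
$$\Phi(M)=\bigl(M\setminus\{(f_i,e_i)\}_{i}\bigr)\cup\{(f_i,e_{i+1})\}_{i},$$
i.e.\ rotate the matching along $C(M)$. This is again a perfect matching of $G$. In $G_{\Phi(M)}$ the same vertex set carries the reversed cycle $e_1\to f_k\to e_k\to\dots$, and all other edges are unchanged. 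So $\Phi(M)$ is cyclic, and we must check that the canonical cycle of $\Phi(M)$ lies on the same vertex set. Then $\Phi(\Phi(M))=M$, and $\Phi$ has no fixed points since $k\ge 1$ and $e_{i+1}\neq e_i$ (a $2$-cycle is impossible in a simple bipartite orientation).

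\textbf{Main obstacle.} The hardest step, and the one I would settle first, is well-definedness of the canonical choice: rerouting must not create or destroy other cycles that would change which cycle is chosen. One option is to work with the set of vertices of $R$ lying on cycles; the main verification is that this set is invariant under $\Phi$. If it is not, I would instead pick the cycle through the smallest $e\in R$ that is not a sink of the reachability structure, as in \cite{Mukherjee}.

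\textbf{Sign reversal.} The permutation $\pi_{\Phi(M)}$ differs from $\pi_M$ by composition with a $k$-cycle, contributing a factor $(-1)^{k-1}$. The product of incidence numbers changes by the factor
$$\prod_{i=1}^k \frac{\langle f_i,e_{i+1}\rangle}{\langle f_i,e_i\rangle}=\prod_i\langle f_i,e_i\rangle\langle f_i,e_{i+1}\rangle .$$
Each interior edge $e_{i+1}$ lies in exactly the two faces $f_i$ and $f_{i+1}$. By orientability (coherent orientation of faces), $\langle f_i,e_{i+1}\rangle=-\langle f_{i+1},e_{i+1}\rangle$. So the product equals $(-1)^k$. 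The total sign change is $(-1)^{k-1}(-1)^k=-1$, giving $\Lambda(\Phi(M))=-\Lambda(M)$ and proving the lemma.
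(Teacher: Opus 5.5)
Your strategy and your sign computation match the paper's proof. You reverse a canonically chosen cycle of $G_M$; the $k$-cycle in the permutation contributes $(-1)^{k-1}$ and coherent orientation contributes $(-1)^k$. The gap is the step you flag and leave open: you never show that $\Phi$ is an involution, i.e.\ that the canonical cycle of $\Phi(M)$ is the reversed cycle $\bar C$. Your reason for uniqueness of the cycle through a vertex of $R$ is also wrong as stated. Walking forward does not determine the cycle, because after $e\to f$ the face vertex $f$ generally has several out-neighbours in $G_M$ (every edge of $f$ in $R$ other than its matched one). The fallback via ``sinks of the reachability structure'' is too vague to fill this.

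The missing idea is to walk backwards. For every perfect matching $M'$, each vertex of $G_{M'}$ has in-degree at most $1$. A face receives only its reversed matching edge. An edge $e\in R$ lies in at most two faces of $K'$, and the matched one has been turned into an out-edge. So its in-degree is $1$ or $0$ (not always $1$ as the Observation states, but at most $1$ is all you need). Hence predecessors are unique, and any two distinct cycles of $G_{M'}$ are vertex-disjoint.

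Now $G_{\Phi(M)}$ is $G_M$ with the edges of $C$ reversed. Every other cycle of $G_M$ is disjoint from $C$, so it survives unchanged. Conversely, any cycle of $G_{\Phi(M)}$ other than $\bar C$ is disjoint from $\bar C$, by the same fact applied to $M'=\Phi(M)$. So it uses no reversed edge and was already a cycle of $G_M$. Thus the cycles of $G_{\Phi(M)}$ are those of $G_M$ with $C$ replaced by $\bar C$. The set of $R$-vertices lying on cycles is therefore invariant, its minimum lies on $\bar C$, and $\Phi(\Phi(M))=M$.

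This vertex-disjointness argument is exactly what the paper uses. It picks the cycle with the lexicographically first tuple of face vertices rather than the smallest $R$-vertex, which is immaterial. With this inserted, your proof is complete.
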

	
	\begin{proof}
		Let $M \in \mathcal{M}$ such that $M$ is cyclic. We will construct another cyclic matching $\widetilde{M}$ corresponding to $M$ and show that this correspondence is in fact, a sign-inversing involution. 
		
		Let $C_1, C_2, \dots ,C_k$ be the directed cycles in $G_M$. We claim that the cycles in $G_M$ are vertex-disjoint, i.e., $V(C_i) \cap V(C_j) = \emptyset$, for each $i, j \in [k], i \ne j$. This is because, if $V(C_i) \cap V(C_j) \ne \emptyset$ for some $i, j \in [k], i \ne j$, then, there must exist a vertex $v$ in $C_i$ with in-degree $2$, which we note from \autoref{deg}, is not possible. Now, the vertex-disjointness of the cycles in $G_M$ leads us to the observation that for any cycle $C$ in $G_M$, the set of vertices $V(C) \cap \mathcal{F}$ is unique. So, for each cycle $C$ in $G_M$, we arrange the vertices of $V(C) \cap \mathcal{F}$ in the lexicographic order and assign this tuple of ordered vertices to $C$. Let us denote this tuple corresponding to a cycle $C$ in $G_M$ as $t_C(G_M)$. Now, we consider $\{t_C \mid C \text{ is a cycle in }G_M\}$ and arrange the elements of this set in lexicographic order. Let $C'$ be the cycle corresponding to the first tuple in this order. 
		Now, we simply reverse the direction of the edges of $C'$, keeping all other edges unchanged, and rename the resultant graph as $G'$. We construct $\widetilde{M}$ as ,
		$$ \widetilde{M} = \{(f,e) \in E(G) \mid (e,f) \in E(G')\}.$$
It can be verified using \autoref{deg} that $\widetilde{M}$ is indeed a matching. Since, every cycle in a bipartite graph is of even length, therefore, from the construction of $G'$, we observe that $|M|=|\widetilde{M}|$, which makes $\widetilde{M}$ perfect. Indeed, $G_{\widetilde{M}}=G'$.
Again, from the vertex-disjointness of the cycles, we observe that the vertex set of each cycle in $G_{\widetilde{M}}$ is preserved under this operation. Let the cycle in $G_{\widetilde{M}}$, which has been obtained by reversing $C'$, be denoted as $C''$. As a result, $t_{C'}(G_M)=t_{C''}(G_{\widetilde{M}})$ and hence $t_{C''}$ comes first in the lexicographic ordering of the tuples of all the cycles in $G_{\widetilde{M}}$. Therefore, reversing the direction of the edges of $C''$ in $G_{\widetilde{M}}$, returns us $G_M$, which corresponds to the original matching $M$. So, this correspondence is an involution. 

Now, we show, $\Lambda(M) = - \Lambda(\widetilde{M})$. Let $S=\{(f,e) \in M \mid (e,f) \in E(C')\}$ and $T= \{(f,e) \in \widetilde{M} \mid (e,f) \in E(C'')\} $. A few observations are worthy of mention here. First, we observe that $M \setminus S= \widetilde{M} \setminus T$, which follows from the construction of $\widetilde{M}$. Next, we observe here that for every $(f,e) \in S$, there exists $ f' \in \mathcal{F}$, $f' \ne f$, such that $(f',e) \in T$ (follows from the construction of $\widetilde{M}$) and $\langle f,e \rangle = - \langle f', e\rangle$ (follows from the orientability of the surface). Similarly, for every $(f,e) \in T$, there exists $ f' \in \mathcal{F}$, $f' \ne f$, such that $(f',e) \in S$ and $\langle f,e \rangle = - \langle f', e\rangle$. Further, it can be noted that $\pi_{\widetilde{M}}= \rho_{|S|} \circ \pi_M$, where $\rho_{|S|}$ is a permutation of length $|S|$. Thus, $\sgn(\pi_{\widetilde{M}})= (-1)^{|S|+1} \sgn(\pi_M)$. So, we have,
		
		\begin{equation*}
			\begin{aligned}
				\Lambda(M) &= \sgn(\pi_M)\prod_{(f,e) \in M}\langle f, e \rangle\\ &= \sgn(\pi_M) \left(\prod_{(f,e) \in M \setminus S}\langle f, e \rangle\right) \left(\prod_{(f,e) \in S}\langle f, e \rangle\right)\\
				&= \sgn(\pi_M) (-1)^{|S|}\left( \prod_{(f,e) \in M \setminus S}\langle f, e \rangle\right)\left( \prod_{(f,e) \in T}\langle f, e \rangle\right)\\
				&= - \sgn(\pi_{\widetilde{M}})\left( \prod_{(f,e) \in \widetilde{M} \setminus T}\langle f, e \rangle \right)\left(\prod_{(f,e) \in T}\langle f, e \rangle\right)\\
				&= - \sgn(\pi_{\widetilde{M}})\prod_{(f,e) \in \widetilde{M}}\langle f, e \rangle\\
				&= -\Lambda(\widetilde{M}).
			\end{aligned}
		\end{equation*}
		
		Thus, the contribution of the sign of each cyclic perfect matching $M$ in the sum, gets cancelled by the sign of its corresponding cyclic perfect matching $\widetilde{M}$. This proves the result.
	\end{proof}
	
	\begin{cor}\label{acyc}
		For any punctured cellulated orientable surface $K'$, there exists an acyclic perfect matching in $G(K')$. 
	\end{cor}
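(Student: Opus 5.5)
The plan is to combine \autoref{sign} and \autoref{inv}, splitting the sum of signs over all perfect matchings into the cyclic and the acyclic ones. By \autoref{sign}, $\det(\partial_R) = \sum_{M \in \mathcal{M}}\Lambda(M) \ne 0$. Since every $M \in \mathcal{M}$ is either cyclic or acyclic, we write
\begin{equation*}
\sum_{M \in \mathcal{M}}\Lambda(M) = \sum_{\substack{M \in \mathcal{M} \\ M \text{ is cyclic}}}\Lambda(M) + \sum_{\substack{M \in \mathcal{M} \\ M \text{ is acyclic}}}\Lambda(M).
\end{equation*}

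By \autoref{inv}, the first sum on the right vanishes, so the sum of $\Lambda(M)$ over acyclic perfect matchings equals $\det(\partial_R) \ne 0$. In particular this sum is nonempty, so at least one acyclic perfect matching $M$ exists in $G(K')$. Since a nonempty sum forces $\mathcal{M}\neq\emptyset$, this also settles the promise made in Section~\ref{prelim}.

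The only point needing care is that \autoref{sign} really gives $\det(\partial_R)\ne 0$. This requires $\partial_R$ to be square and nonsingular: $R$ indexes a basis of the row space, which has dimension $p-1$ by \autoref{li}, so $\partial_R$ is a $(p-1)\times(p-1)$ matrix of full rank. I would also check that \autoref{li}'s independence argument, carried out over $\mathbb{Z}$, transfers to rank over $\mathbb{Q}$, so that the determinant is nonzero as an integer. Once this is in place, the corollary is an immediate consequence and I expect no real obstacle.
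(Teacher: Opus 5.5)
Your proposal is correct and is the paper's argument: the paper proves this corollary in one line as a direct consequence of Lemma~\ref{sign} and Lemma~\ref{inv}, and you have written out that deduction by splitting $\sum_{M\in\mathcal{M}}\Lambda(M)$ into its cyclic and acyclic parts. Your extra check that $\partial_R$ is a square, nonsingular $(p-1)\times(p-1)$ matrix, with the $\mathbb{Z}$-independence in Lemma~\ref{li} giving full rank over $\mathbb{Q}$, is the content implicit in Lemma~\ref{sign}, and it is sound.
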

	
	\begin{proof}
		This follows as a direct consequence of \autoref{sign} and \autoref{inv}.
	\end{proof}
	
	We introduce a term here which we will use in the next proof. Let $f \in \mathcal{F}(K')$, where $K'$ is a punctured, orientable, cellulated surface, with an orientation assigned on each of its edges. Then, an edge $e$ of $f$ is said to be a ``good" edge of $f$ if the orientation of $e$ is opposite to the induced orientation of $f$ on $e$. Thus, an assignment of orientations on the edges of $K'$ is Pfaffian, iff every face of $K'$ has odd number of good edges.
	\begin{thm}\label{exis}
		Any punctured cellulated orientable surface admits a Pfaffian orientation.
	\end{thm}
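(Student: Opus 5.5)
Assuming \autoref{acyc}, there is an acyclic perfect matching $M$ in $G=G(K')$. We build the Pfaffian orientation greedily, fixing one edge of $R$ at a time in an order dictated by the acyclic digraph $G_M$. First, every edge of $K'$ not in $R$ gets an arbitrary orientation. Each face $f$ is matched by $M$ to a unique edge $e_f\in R$. Orienting $e_f$ appropriately fixes the parity of good edges of $f$, provided every other edge of $f$ has already been oriented. Orienting $e_f$ also affects at most one other face, namely the second face containing $e_f$, if $e_f$ is internal.

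The ordering comes from $G_M$. In $G_M$ the arcs are of two kinds: $(e_f,f)$ for matched pairs, reversed from $(f,e_f)$; and $(f,e)$ for every unmatched incidence $e\subseteq f$ with $e\in R$. So $f\to e$ in $G_M$ means that $e$ is an edge of $f$ other than $e_f$, which must be oriented before $e_f$ can be used to fix $f$.

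We process faces by repeatedly removing sinks, using \autoref{dag}. Since $G_M$ is acyclic, so is every induced subgraph. By \autoref{deg}(a), every vertex of $R$ has out-degree $1$, with its out-arc going to its matched face. Hence a sink of out-degree $0$ must lie in $\mathcal{F}$. A face $f$ that is a sink has no unmatched edges of $R$ pointing out of it, so all its edges other than $e_f$ lie outside $R$ (already oriented) or have been deleted earlier (already oriented). We then orient $e_f$ so that $f$ has an odd number of good edges, and delete both $f$ and $e_f$ from the digraph.

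The induction hypothesis is that after each step the remaining graph is the induced subgraph $G_M$ restricted to unprocessed faces and their matched edges. This graph is still acyclic and still satisfies \autoref{deg}(a), so a new sink face exists. Since $M$ is perfect, the process terminates with every face processed and every edge of $R$ oriented.

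The main point to check is that a later choice never breaks an earlier face's parity. Suppose face $f'$ is processed after $f$ and the edge $e_{f'}$ also lies on $f$. Then $e_{f'}\subseteq f$ is an unmatched incidence for $f$, giving the arc $f\to e_{f'}$ in $G_M$. That arc would have prevented $f$ from being a sink while $e_{f'}$ was still present, which is a contradiction. So when $e_{f'}$ is oriented, every other face containing it is still unprocessed, and no earlier parity is disturbed.

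This invariance step is the one I expect to require the most care. In particular, I need to confirm that the out-arcs of $f$ in $G_M$ are exactly its unmatched $R$-edges, so that being a sink means exactly that all other edges of $f$ are already oriented. Granting this, each face ends with an odd number of good edges, and the orientation is Pfaffian.
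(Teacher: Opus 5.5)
Your proposal is correct and follows essentially the same route as the paper. You take an acyclic perfect matching $M$ from Corollary~\ref{acyc}, repeatedly remove a sink face $f$ of $G_M$ together with its matched edge $e_f$, and orient $e_f$ to fix the parity of $f$; the arcs $f\to e$ for unmatched incidences guarantee that later edge choices never lie on already-processed faces. Your non-interference argument, which uses out-degree zero in the reduced graph at the moment $f$ is processed and relies only on the out-degree part of Observation~\ref{deg}(a), is in fact slightly more careful than the paper's, which spells the step out only for $f_1$ (via $\outdeg_{G_M}(f_1)=0$) and then says ``we continue the same procedure.''
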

	\begin{proof}
		Our line of argument is aligned parallelly with Kastelyn's proof of existence of Pfaffian orientations for planar graphs.
		
		Let $K'$ be a punctured, cellulated, orientable surface and $G(K')$ be the directed bipartite graph associated with $K'$, as described previously. Therefore, from \autoref{acyc}, it follows that there exists an acyclic perfect matching $M$ in $G$. So, $G_M$ is acyclic. Thus, we recall from \autoref{dag} that there exists a vertex $f_1$ in $G_M$ with out-degree $0$. From \autoref{deg}, we infer that $f_1 \in \mathcal{F}$ and further, there exists a unique $e_1 \in R$ such that $(e_1, f_1) \in E(G_M)$. Now, if $f_1$ has odd number of good edges then we keep the orientation of $e_1$ unaltered. Otherwise, we reverse the orientation of $e_1$ so that $f_1$ has odd number of good edges. We now delete the vertices $e_1,f_1$ from $G_M$ and name the resultant graph $G_M'$. $G_M'$ is also acyclic and hence has a vertex $f_2$ of outdegree $0$. We note from \autoref{deg} that the deletion of $e_1, f_1$ does not affect the out-degree of the vertices in $R$ and the in-neighbours of the vertices in $\mathcal{F}$, in $G_M'$. So, again, $f_2 \in \mathcal{F}$ and there exists a unique $e_2 \in \mathcal{F}$ such that $(e_2, f_2) \in E(G_M')$. Now, as before, if $f_2$ has odd number of good edges, then we keep the orientation of $e_2$ unaltered, otherwise we reverse the orientation of $e_2$ so that $f_2$ has odd number of edges. A crucial observation here is that the reorientation of $e_2$ does not disturb the orientation of the edges in $f_1$ since $\outdeg_{G_M}(f_1)=0$, which makes it impossible for $e_2$ to be an out-neighbour and hence, an edge of $f_1$. So, now both $f_1$ and $f_2$ has odd number of good edges. Now, we delete $e_2, f_2$ from $G_M'$ and move on to the next vertex of outdegree $0$ in the resultant acyclic graph, which by the same argument, is a vertex of $\mathcal{F}$. We continue the same procedure till all the vertices of $G_M$ have been deleted.  At each step, we ensure that the faces of $K'$ have odd number of good edges by an appropriate reorientation of its unique in-neighbour in $G_M$. The resulting orientation on the edges of $K'$ is hence a Pfaffian orientation on $K'$.
	\end{proof}
	
	Now, we are ready to prove our main theorem.
	
	\begin{proof}[Proof of \autoref{main}]
		It follows from \autoref{count} that the number of Pfaffian orientations of $K'$ is given by the number of solutions of the system of equations $AX=\mathbb{1}$, where $A$ is the Pfaffian matrix of $K'$. Let $|\mathcal{F}(K)|=p$, $|\mathcal{E}(K)|=d$. Then, from \autoref{pfaf} and \autoref{nullity}, we deduce that $\nullity(A)=(d-p+1)$.We recall that the Euler characteristic of $K$ is given by $\chi(K)= (v - d + p) =(2 -2g)$, where, $|\mathcal{V}(K)|=v$. Therefore, we have, $(d-p+1)=(v-1+2g)$.  Hence, from \autoref{exis}, we conclude that the number of solutions of the system $AX=\mathbb{1}$ is given by $2^{v-1+2g}$.  Hence, the result follows.
	\end{proof}
	
	We obtain \autoref{cor} as an immediate result of \autoref{main}.
	
		\bibliographystyle{plain}
	\bibliography{ref.bib}
	\end{document}